\documentclass[11pt,reqno]{amsart}
\usepackage[a4paper,margin=27mm]{geometry}
\usepackage[T1]{fontenc}
\usepackage{lmodern}
\usepackage{amsmath,amssymb,amsthm,mathtools}
\usepackage{microtype}
\usepackage[hidelinks]{hyperref}
\numberwithin{equation}{section}
\newtheorem{theorem}{Theorem}[section]
\newtheorem{corollary}[theorem]{Corollary}
\newtheorem{proposition}[theorem]{Proposition}
\theoremstyle{remark}
\newtheorem{remark}[theorem]{Remark}
\newcommand{\HH}{\mathbb H}
\newcommand{\RR}{\mathbb R}
\newcommand{\SSS}{\mathbb S}
\newcommand{\dd}{\,\mathrm d}
\newcommand{\dmu}{\,\mathrm d\mu}
\newcommand{\nablab}{\overline\nabla}
\DeclareMathOperator{\tr}{tr}
\DeclareMathOperator{\Vol}{Vol}
\DeclareMathOperator{\diver}{div}
\title[Strongly stable CMC-one hypersurfaces in every dimension]{Strongly stable CMC-one hypersurfaces in every hyperbolic space of dimension at least four}
\author{Zihao Wang}
\address{School of Mathematical Sciences, Fudan University, Shanghai 200433, China}
\email{wangzh25@m.fudan.edu.cn}
\date{September 19, 2026}
\subjclass[2020]{Primary 53C42; Secondary 53A10, 58J50}
\keywords{Constant mean curvature, hyperbolic space, strong stability, rotational hypersurface, horospherical rigidity}
\hypersetup{pdftitle={Strongly stable CMC-one hypersurfaces in every hyperbolic space of dimension at least four},pdfsubject={All-dimensional counterexamples to endpoint horospherical rigidity}}

\begin{document}
\begin{abstract}
For every integer $d\ge4$, we prove strong stability for a subfamily of classical rotational hypersurfaces in $\HH^d$ with normalized mean curvature one. The examples are complete, two-sided, properly embedded, and nowhere umbilic, with topology $\RR\times\SSS^{d-2}$. An explicit positive supersolution yields a quantitative stability inequality for all compactly supported test functions. Consequently, endpoint horospherical rigidity fails in every ambient dimension at least four.
\end{abstract}
\maketitle

\section{Introduction and main results}

We prove that nonhorospherical strongly stable hypersurfaces of normalized constant mean curvature one exist in every hyperbolic space of ambient dimension at least four. Throughout, $d$ denotes the \emph{ambient dimension}, so the hypersurfaces have dimension $d-1$. The hyperbolic metric has constant sectional curvature $-1$.

For an oriented hypersurface with global unit normal $N$, our conventions are
\[
 A(X)=-\nablab_XN,\qquad H=\frac{1}{d-1}\tr A,
 \qquad \Delta=\diver\nabla.
\]
A horosphere, suitably oriented, has $A=I$ and $H=1$. The quadratic form relevant to strong stability is
\begin{equation}\label{eq:Q}
 Q(\varphi)=\int_M\bigl(|\nabla\varphi|^2-(|A|^2-(d-1))\varphi^2\bigr)\dmu,
 \qquad \varphi\in C_c^\infty(M).
\end{equation}
A CMC hypersurface is \emph{strongly stable} if $Q(\varphi)\ge0$ for all such $\varphi$, whereas \emph{weak stability} tests only functions with $\int_M\varphi\dmu=0$. Thus strong stability implies weak stability. The form \eqref{eq:Q} is the second variation of area with its volume Lagrange multiplier; it does not assert that a nonminimal hypersurface is an unconstrained critical point of area. These conventions agree, after normalizing $H$, with \cite[Introduction]{Miranda}.

\subsection{Stability and the surface case}
The variational theory of CMC hypersurfaces was developed in the work of Barbosa and do Carmo \cite{BarbosaCarmo} and Barbosa, do Carmo and Eschenburg \cite{BarbosaCarmoEschenburg}. In particular, volume-preserving stability leads to spherical rigidity for closed CMC hypersurfaces in Euclidean space. For noncompact hypersurfaces, the admissible variations and the behavior at infinity become essential. At the analytic level, the relation between stability and positive solutions of a Schr\"odinger equation is central to the work of Fischer-Colbrie and Schoen \cite{FischerColbrieSchoen}. Our proof uses an elementary positive-supersolution identity, which we include below.

In $\HH^3$, da Silveira \cite{Silveira} proved that a complete noncompact strongly stable CMC surface with $|H|\ge1$ is a horosphere. This rigidity contrasts with the abundance of complete CMC-one surfaces without the stability assumption. Bryant \cite{Bryant} introduced a holomorphic representation for such surfaces, and Umehara and Yamada \cite{UmeharaYamada} developed the construction of complete examples. Do Carmo and da Silveira \cite{CarmoSilveira} related finite total curvature to finite index; de Lima and Rossman \cite{LimaRossman} studied the index through the underlying Riemann surface and secondary Gauss map, including explicit calculations for catenoid cousins. Collin, Hauswirth and Rosenberg \cite{CollinHauswirthRosenberg} proved that properly embedded CMC-one surfaces of finite topology have finite total curvature and regular ends, and identified the annular examples as catenoid cousins. These results distinguish finite index and controlled end geometry from global stability.

\subsection{Rotational geometry, ends and curvature estimates}
Do Carmo and Dajczer \cite{doCarmoDajczer} systematically studied rotation hypersurfaces in space forms. The precise family used here is already contained in Perdomo's construction \cite[\S7.1 and Theorem~7.1 in the arXiv version]{Perdomo}; the parameter correspondence is given in Section~\ref{subsec:classical}. Thus the construction of the embeddings is not the new issue in this paper.

Stability questions for rotational examples have also been studied in other regimes. B\'erard and Sa Earp \cite{BerardSaEarp} investigated maximal stable domains on catenoids and stable domains on CMC-one catenoid cousins in $\HH^3$. Stability of a proper subdomain is not the same as stability of the complete surface. B\'erard, de Lima and Rossman \cite{BerardLimaRossman} obtained index-growth results for Delaunay hypersurfaces, including hyperbolic examples with $H>1$; this strict-inequality regime is distinct from the endpoint considered here.

Global restrictions on ends likewise depend on the mean-curvature range. Cheng, Cheung and Zhou \cite{ChengCheungZhou} used harmonic-function methods to prove one-end theorems for weakly stable CMC hypersurfaces. Their hyperbolic conclusions include $\HH^4$ when $H^2\ge10/9$ and $\HH^5$ when $H^2\ge7/4$, and do not cover $H=1$. In ambient dimension three, Rosenberg, Souam and Toubiana \cite{RosenbergSouamToubiana} established curvature estimates for stable $H$-surfaces under sectional-curvature bounds. These results supply important structural context, but do not determine the stability of the complete higher-dimensional rotational hypersurfaces treated below.

\subsection{Related rigidity questions and the endpoint}
A related question of do Carmo asks whether a complete noncompact stable CMC hypersurface in Euclidean space must be minimal. For the strong-stability formulation and finite-index extensions, see Nelli and Pontuale \cite{NelliPontuale}. Their results include minimality under subexponential volume growth in every dimension, and exclude complete noncompact finite-index hypersurfaces with the same growth condition in $\HH^d$ when $H>1$ \cite[Theorem~2.2]{NelliPontuale}. The distinction between minimality and flatness, and between $H>1$ and $H=1$, is important here.

Chodosh's ICM report \cite[\S13, final list item~(2); arXiv version, p.~19]{Chodosh} records the hyperbolic rigidity question for two-sided stable CMC immersions with unnormalized mean curvature of absolute value at least the hypersurface dimension. Miranda \cite[Introduction]{Miranda} explicitly states its complete, strongly stable formulation. In our notation, its noncompact version asks:
\begin{quote}
\emph{Must a complete, connected, noncompact, two-sided strongly stable CMC immersion $M^{d-1}\to\HH^d$ with $|H|\ge1$ be a horosphere?}
\end{quote}
Hong \cite[Introduction; arXiv version~4, p.~4]{Hong} separately asks whether a complete noncompact CMC-one hypersurface in $\HH^4$, stable even in the weak sense, must be a horosphere. The two-dimensional theorem of da Silveira gives the affirmative surface case.

The strict inequality has a different history. Hong \cite[Theorem~1.1]{Hong} obtains nonexistence of complete noncompact finite-index CMC hypersurfaces in $\HH^4$ with $H>1$, assuming finitely many ends and finite-dimensional $H^1_0(M)$, the compactly supported first de Rham cohomology. Miranda \cite[Theorem~D and Corollary~E]{Miranda} proves related compactness and weak-stability rigidity results in ambient dimension six under a larger mean-curvature threshold; in $\HH^6$, that threshold is $|H|>7/5$ in our normalization. Neither result covers the endpoint addressed here.

Moreover, the broad horospherical statement is already false in higher dimension with $H>1$: Wang \cite[Proposition~7.2 and Corollary~7.3]{Wang} gives a strongly stable tube in $\HH^7$ with underlying manifold $\HH^3\times\SSS^3$ and normalized $H^2=49/48$. Our result concerns instead the \emph{exact endpoint} $H=1$, already in $\HH^4$ and in \emph{every} higher ambient dimension, for a two-ended rotational family. The examples have polynomial volume growth and bounded second fundamental form, so those properties alone do not restore endpoint horospherical rigidity.

\subsection{Main results}
The following theorem gives an explicit sufficient criterion for strong stability and a weighted inequality for arbitrary compactly supported test functions.

\begin{theorem}[Rotational strong-stability criterion]\label{thm:family}
Let $d\ge4$ be an integer and let $a>0$. Define
\begin{equation}\label{eq:parameters}
 c=a^{d-2}\bigl(\sqrt{1+a^2}-a\bigr),
 \qquad \alpha=\frac{c}{a^{d-1}}=\sqrt{1+a^{-2}}-1.
\end{equation}
Let $r$ solve
\begin{equation}\label{eq:ode}
 r''=(d-3)cr^{2-d}+(d-2)c^2r^{3-2d},
 \qquad r(0)=a,\qquad r'(0)=0.
\end{equation}
Then $r$ is smooth and even on $\RR$, satisfies $r\ge a$, and tends to infinity at both ends. Set
\begin{equation}\label{eq:Rt}
 R=\sqrt{1+r^2},\qquad v=r+cr^{2-d},
 \qquad t(s)=\int_0^s\frac{v(\tau)}{1+r(\tau)^2}\dd\tau.
\end{equation}
In the hyperboloid model, the map
\begin{equation}\label{eq:embedding}
 F_{d,a}(s,\omega)=\bigl(R(s)\cosh t(s),R(s)\sinh t(s),r(s)\omega\bigr),
 \quad (s,\omega)\in\RR\times\SSS^{d-2},
\end{equation}
is a complete, connected, boundaryless, two-sided, proper embedding into $\HH^d$ with $H=1$. Its induced metric is
\begin{equation}\label{eq:metric}
 g=\mathrm ds^2+r^2g_{\SSS^{d-2}},
\end{equation}
and its principal curvatures are
\begin{equation}\label{eq:curvatures}
 \lambda=1+\frac{c}{r^{d-1}}\quad(d-2\text{ times}),
 \qquad \mu=1-(d-2)\frac{c}{r^{d-1}}.
\end{equation}
In particular, the hypersurface is nowhere umbilic. If
\begin{equation}\label{eq:threshold}
 a\ge a_*(d):=\frac{2(d-2)}{(d-1)(d-3)},
\end{equation}
then it is strongly stable. More precisely, writing
\begin{equation}\label{eq:beta}
 \beta=(d-3)^2-2(d-2)\alpha\ge0,
\end{equation}
one has
\begin{equation}\label{eq:weighted}
 Q(\varphi)\ge
 \int_M r^{6-2d}\bigl|\nabla(r^{d-3}\varphi)\bigr|^2\dmu
 +\beta c\int_M r^{1-d}\varphi^2\dmu
 \qquad\text{for all }\varphi\in C_c^\infty(M).
\end{equation}
The threshold \eqref{eq:threshold} is sufficient; no optimality is asserted.
\end{theorem}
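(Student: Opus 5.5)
The proof splits into three parts: the ODE analysis, the geometry of $F_{d,a}$, and the stability estimate. The last part reduces to a one-line computation with the positive function $u=r^{3-d}$.

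\emph{ODE and first integral.} The right-hand side of \eqref{eq:ode} is positive for $r>0$, so $r''>0$. The solution is even by uniqueness, since $s\mapsto r(-s)$ solves the same problem. Hence $r\ge a$, and $r$ is convex with $r''\ge (d-3)c\,r^{2-d}$, which gives growth and global existence; I will make this precise through the first integral. With $v=r+cr^{2-d}$ one has $v'=r'(1-(d-2)cr^{1-d})$. Differentiating shows that
\[
r'^2=1+r^2-v^2
\]
is conserved along \eqref{eq:ode}. At $s=0$ the right-hand side vanishes, because the choice of $c$ in \eqref{eq:parameters} is exactly $a+ca^{2-d}=\sqrt{1+a^2}$. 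Hence this identity holds for all $s$. The right-hand side equals $r^2\bigl(1-(1+cr^{1-d})^2\bigr)+1-2cr^{3-d}+O(\dots)$. I will check that it is positive for $r>a$ and asymptotic to $1$ as $r\to\infty$. This gives $r'\to\pm1$, global existence, and $r\to\infty$ at both ends.

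\emph{Geometry.} In the Lorentzian product, $\langle F,F\rangle=-R^2+r^2=-1$, so $F$ maps into $\HH^d$. Using $R'=rr'/R$ and $t'=v/R^2$, one computes
\[
|F_s|^2=R^2t'^2+r'^2-R'^2=\frac{v^2+r'^2}{1+r^2}=1
\]
by the first integral. Together with $F_\omega\perp F_s$, this gives the metric \eqref{eq:metric}. Completeness follows because $s$ is arc length on a complete line. Properness follows from $r\to\infty$. Embeddedness follows since $t$ is strictly increasing (as $v>0$) and $\omega\mapsto r\omega$ is injective.

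For the second fundamental form, I will write down the unit normal $N$ explicitly, orthogonal in the Lorentzian sense to $F$, $F_s$ and $F_\omega$. The $\SSS^{d-2}$ directions then give the principal curvature $\lambda=v/r=1+cr^{1-d}$, and the profile direction gives $\mu$ via $r''$. The identity $(d-2)\lambda+\mu=d-1$ follows from the coefficients in \eqref{eq:curvatures}, so $H=1$. Since $\lambda-\mu=(d-1)cr^{1-d}>0$, the hypersurface is nowhere umbilic. Two-sidedness is immediate from the global normal.

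\emph{Stability.} With $x=cr^{1-d}$, the potential in \eqref{eq:Q} is
\[
|A|^2-(d-1)=(d-2)(2x+x^2)-2(d-2)x+(d-2)^2x^2=(d-1)(d-2)c^2r^{2-2d}.
\]
I will use the standard identity for positive $u$ and $\varphi=u\psi$:
\[
Q(\varphi)=\int_M u^2|\nabla\psi|^2\dmu-\int_M\frac{Lu}{u}\varphi^2\dmu,\qquad L=\Delta+|A|^2-(d-1).
\]
It follows from integrating $\diver(u\psi^2\nabla u)$ by parts. Take $u=r^{3-d}$ and $k=d-3$. Since $\Delta u=u''+(d-2)(r'/r)u'$, the $r'^2$ terms carry the coefficient $k(k+1)-(d-2)k=0$. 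Therefore $\Delta u=-(d-3)r^{2-d}r''$, and by \eqref{eq:ode},
\[
-\frac{Lu}{u}=(d-3)\frac{r''}{r}-(d-1)(d-2)c^2r^{2-2d}=(d-3)^2cr^{1-d}-2(d-2)c^2r^{2-2d}.
\]
Since $r\ge a$, we have $cr^{1-d}\le\alpha$, so this quantity is at least $\beta c\,r^{1-d}$. Moreover $u^2|\nabla\psi|^2=r^{6-2d}|\nabla(r^{d-3}\varphi)|^2$, which yields \eqref{eq:weighted}.

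It remains to show $\beta\ge0$. Set $m=(d-3)^2/(2(d-2))$. The condition $\beta\ge0$ is $\sqrt{1+a^{-2}}\le 1+m$, that is, $a\ge(2m+m^2)^{-1/2}$. Since $\alpha$ is decreasing in $a$, it suffices to check that $a_*(d)$ dominates this bound for all $d\ge4$, by elementary algebra. This last comparison, together with the sign analysis of the first integral for $r>a$, is where I expect the only real care to be needed.
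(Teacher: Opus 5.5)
Your proposal is correct and follows the paper's proof essentially step for step: the same first integral $(r')^2=R^2-v^2$, the same supersolution $u=r^{3-d}$ with the $(r')^2$ terms cancelling, the same ground-state identity, and the same bound $cr^{1-d}\le\alpha$. The final comparison you flag is in fact an equality, since $(d-3)^2+4(d-2)=(d-1)^2$ gives $(2m+m^2)^{-1/2}=a_*(d)$; also, your expansion of $1+r^2-v^2$ should read exactly $1-2cr^{3-d}-c^2r^{4-2d}$, with no error term, and this expression is increasing in $r$ for $d\ge4$.
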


\begin{corollary}[Explicit counterexamples in every $\HH^d$, $d\ge4$]\label{cor:all}
For each integer $d\ge4$, take
\begin{equation}\label{eq:uniform}
 a=\frac{12}{5},\qquad c_d=\frac{12^{d-2}}{5^{d-1}},
 \qquad \beta_d=(d-3)^2-\frac{d-2}{6}.
\end{equation}
Then $M_d:=F_{d,12/5}(\RR\times\SSS^{d-2})\subset\HH^d$ is a complete, connected, noncompact, boundaryless, two-sided, strongly stable and properly embedded CMC hypersurface with $H=1$ that is not a horosphere. For every $\varphi\in C_c^\infty(M_d)$,
\begin{equation}\label{eq:explicit-weight}
 Q(\varphi)\ge
 \int_{M_d}r^{6-2d}\bigl|\nabla(r^{d-3}\varphi)\bigr|^2\dmu
 +\beta_dc_d\int_{M_d}r^{1-d}\varphi^2\dmu,
 \qquad \beta_d\ge\frac23.
\end{equation}
Thus the endpoint horospherical conclusion fails in every ambient dimension $d\ge4$, whether stability is interpreted in the strong or the weak sense.
\end{corollary}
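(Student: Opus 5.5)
The corollary follows by specializing Theorem~\ref{thm:family} to $a=12/5$. The work is a short arithmetic verification of the threshold and of the constants. The only mildly nontrivial step is checking the threshold \eqref{eq:threshold} and the bound $\beta_d\ge 2/3$ uniformly in $d$.

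\textbf{Constants.} For $a=12/5$ one has $1+a^2=169/25$, so $\sqrt{1+a^2}-a=1/5$. Then \eqref{eq:parameters} gives
\[
 c=\Bigl(\tfrac{12}{5}\Bigr)^{d-2}\cdot\tfrac15=\frac{12^{d-2}}{5^{d-1}}=c_d,
 \qquad
 \alpha=\sqrt{1+\tfrac{25}{144}}-1=\tfrac{13}{12}-1=\tfrac1{12}.
\]
Substituting into \eqref{eq:beta} yields $\beta=(d-3)^2-\tfrac{2(d-2)}{12}=\beta_d$, as in \eqref{eq:uniform}.

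\textbf{Threshold.} I will show $a_*(d)\le 4/3<12/5$ for all $d\ge4$. The inequality $\frac{2(d-2)}{(d-1)(d-3)}\le\frac43$ is equivalent to $6(d-2)\le4(d-1)(d-3)$. This rearranges to $2d^2-11d+12=(2d-3)(d-4)\ge0$, which holds for $d\ge4$. Hence \eqref{eq:threshold} is satisfied.

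\textbf{Bound on $\beta_d$.} Set $f(d)=(d-3)^2-(d-2)/6$. Then $f(4)=1-\tfrac13=\tfrac23$, and
\[
 f(d+1)-f(d)=2d-5-\tfrac16>0 \qquad\text{for } d\ge4.
\]
So $f$ is increasing on the integers $d\ge4$, which gives $\beta_d\ge2/3>0$.

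\textbf{Conclusion.} All hypotheses of Theorem~\ref{thm:family} hold, so $M_d$ is a complete, connected, boundaryless, two-sided, properly embedded hypersurface with $H=1$, and it is strongly stable. The weighted inequality \eqref{eq:weighted} becomes \eqref{eq:explicit-weight} with the constants computed above.

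$M_d$ is noncompact. Indeed, $F_{d,12/5}$ is a proper embedding of the noncompact manifold $\RR\times\SSS^{d-2}$. Alternatively, the image is unbounded because $r\to\infty$ at both ends.

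$M_d$ is not a horosphere. A horosphere is totally umbilic ($A=I$), whereas \eqref{eq:curvatures} gives $\lambda-\mu=(d-1)c_d\,r^{1-d}>0$ everywhere, so $M_d$ is nowhere umbilic.

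Finally, strong stability implies weak stability. Therefore $M_d$ is a counterexample to the endpoint horospherical conclusion under either interpretation of stability, in every ambient dimension $d\ge4$.
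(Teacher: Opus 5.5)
Your proposal is correct and follows the paper's proof: specialize Theorem~\ref{thm:family} to $a=12/5$, compute $c_d$ and $\alpha=1/12$, and read off $\beta_d$. The only cosmetic difference is that you check $a_*(d)\le 4/3<12/5$ directly and obtain $\beta_d\ge 2/3$ by monotonicity. The paper instead writes $\beta_d=\tfrac23+\tfrac{(d-4)(6d-13)}{6}$ and gets the threshold from $\beta_d\ge0$ through the equivalence proved in Section~\ref{sec:stability}.
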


The higher-dimensional examples are constructed directly as hypersurfaces of $\HH^d$. They are not obtained by viewing a hypersurface of $\HH^4$ inside a higher-codimension totally geodesic subspace, nor by taking a Riemannian product. The profile equation, the spherical multiplicity and the Jacobi potential all vary with $d$.

\subsection{Relation to the classical rotational family}\label{subsec:classical}
Rotational hypersurfaces in space forms were systematically studied by do Carmo and Dajczer \cite{doCarmoDajczer}. The precise embeddings used here appear in Perdomo \cite[\S7.1 and Theorem~7.1 in the arXiv version]{Perdomo}. In his notation, set $n=d-1$ and $H=1$. The profile $g$ and parameter $C>0$ satisfy
\begin{equation}\label{eq:perdomo}
 (g')^2+g^{2-2n}+2g^{2-n}=C,\qquad r=\frac{g}{\sqrt C}.
\end{equation}
The substitution $C=c^{-2/n}$ and $g=\sqrt C\,r$ transforms \eqref{eq:perdomo} into the first integral \eqref{eq:first} below. His principal curvatures become \eqref{eq:curvatures}, and his profile parameter $\theta$ is our $t$. Up to reordering the Lorentzian coordinates, his embedding is exactly \eqref{eq:embedding}. We therefore make no claim that the rotational embeddings themselves are new. The contribution established here is the explicit strong-stability criterion and its consequence for endpoint horospherical rigidity. The geometric and stability calculations needed for the result are included in full. Section~2 establishes the global embedding and its principal curvatures. Section~3 proves the stability criterion. Section~4 gives uniform explicit parameters and a second computation of the full quadratic form. Section~5 describes the ends and explains the failure of strong stability in the surface case.

\section{The complete rotational embedding}\label{sec:geometry}

\subsection{The global profile and its first integral}
The initial-value problem \eqref{eq:ode} has a unique local smooth solution as long as $r>0$. Direct differentiation gives
\[
 \frac{\mathrm d}{\mathrm ds}
 \bigl((r')^2+2cr^{3-d}+c^2r^{4-2d}\bigr)=0.
\]
Writing $b=\sqrt{1+a^2}-a$, we have $b^2+2ab=1$ and $c=a^{d-2}b$. Evaluation at $s=0$ therefore yields
\begin{equation}\label{eq:first}
 (r')^2=1-2cr^{3-d}-c^2r^{4-2d}.
\end{equation}
Since $d\ge4$ and $c>0$, equation \eqref{eq:ode} implies $r''>0$. On the positive half of the maximal existence interval, $r'>0$ and $r\ge a$. Equation \eqref{eq:first} gives $r'<1$. Thus, on any finite interval $0\le s\le S$ on which the solution is defined,
\[
 a\le r(s)\le a+S,\qquad |r'(s)|\le1.
\]
The smooth ODE cannot cease to exist at a finite positive time. Uniqueness and invariance under $s\mapsto-s$ give a global even solution. In particular, smoothness across the neck does not rely on gluing solutions of a first-order square-root equation.

For any $s_0>0$, strict convexity gives $r'(s)\ge r'(s_0)>0$ for $s\ge s_0$. Hence $r(s)\to\infty$ as $s\to\infty$, and also as $s\to-\infty$ by evenness. Equation \eqref{eq:first} then implies
\begin{equation}\label{eq:asymptotics}
 \lim_{s\to\infty}r'(s)=1,
 \qquad \lim_{|s|\to\infty}\frac{r(s)}{|s|}=1.
\end{equation}
With $p=r'$ and $v$ as in \eqref{eq:Rt}, we will repeatedly use
\begin{equation}\label{eq:identities}
 p^2+v^2=R^2,\qquad r''=r-vv_r,
 \qquad v_r=1-(d-2)cr^{1-d}.
\end{equation}
Here $v_r$ denotes differentiation with respect to $r$, not with respect to $s$.

\subsection{Metric, normal and properness}
Write an ambient point as $X=(X_0,X_1,Z)$, with $Z\in\RR^{d-1}$, and set
\[
 \langle X,X\rangle_L=-X_0^2+X_1^2+|Z|^2,
 \qquad \HH^d=\{\langle X,X\rangle_L=-1,\ X_0>0\}.
\]
The map \eqref{eq:embedding} takes values in $\HH^d$ because $-R^2+r^2=-1$. A direct differentiation, using \eqref{eq:identities} and $t'=v/R^2$, gives
\[
 F_{d,a}^*g_{\HH^d}
 =\left(\frac{p^2}{R^2}+R^2(t')^2\right)\mathrm ds^2
   +r^2g_{\SSS^{d-2}}
 =\mathrm ds^2+r^2g_{\SSS^{d-2}}.
\]
This positive-definite metric dominates the complete product metric $\mathrm ds^2+a^2g_{\SSS^{d-2}}$, and is therefore complete.

Along the immersion, define
\begin{equation}\label{eq:frame}
 e_\rho=(r\cosh t,r\sinh t,R\omega),
 \qquad e_t=(\sinh t,\cosh t,0).
\end{equation}
These are orthogonal unit tangent vectors to $\HH^d$. The meridional unit tangent and a global unit normal are
\begin{equation}\label{eq:normal}
 T=F_s=\frac pR e_\rho+\frac vR e_t,
 \qquad N=-\frac vR e_\rho+\frac pR e_t.
\end{equation}
The normal is orthogonal to the sphere directions, and \eqref{eq:identities} gives $|N|=1$. It is smooth everywhere, including at $s=0$, so the immersion is two-sided.

Since $v>0$, the function $t$ is strictly increasing. The ratio $X_1/X_0=\tanh t(s)$ at an image point uniquely determines $s$, and then $Z/r(s)$ determines $\omega$. Thus $F_{d,a}$ is injective. Furthermore,
\[
 X_0=R\cosh t\ge R\longrightarrow\infty
 \qquad\text{as }|s|\longrightarrow\infty.
\]
The inverse image of a compact subset of $\HH^d$ is therefore closed and contained in a compact slab of $\RR\times\SSS^{d-2}$. Hence $F_{d,a}$ is proper. A proper injective immersion is an embedding. The domain is connected and without boundary.

\subsection{Principal curvatures, including at the neck}
We compute the shape operator without dividing by $p=r'$. Put $r=\sinh\rho$ and $R=\cosh\rho$. In these coordinates the ambient metric is
\[
 g_{\HH^d}=\mathrm d\rho^2+\cosh^2\rho\,\mathrm dt^2
             +\sinh^2\rho\,g_{\SSS^{d-2}}.
\]
Thus $r$ is the radius of the spherical orbit, whereas $\rho=\operatorname{arsinh}r$ is the ambient distance to the rotation axis. The connection identities are
\begin{equation}\label{eq:connection}
 \nablab_{e_\rho}e_\rho=0,\quad
 \nablab_{e_\rho}e_t=0,\quad
 \nablab_{e_t}e_\rho=\frac rR e_t,\quad
 \nablab_{e_t}e_t=-\frac rR e_\rho.
\end{equation}
For a unit sphere tangent $E$, one also has $\nablab_Ee_\rho=(R/r)E$ and $\nablab_Ee_t=0$. Since the coefficients of $N$ depend only on $s$,
\[
 -\nablab_EN=\frac vr E.
\]
The spherical principal curvature is therefore $\lambda=v/r$, with multiplicity $d-2$.

Set $\xi=p/R$ and $\eta=v/R$, so that $N=-\eta e_\rho+\xi e_t$. Along the meridian,
\[
 \xi'=\frac{r''}{R}-\frac{rp^2}{R^3},
 \qquad \eta'=\frac{v_rp}{R}-\frac{rpv}{R^3}.
\]
Using \eqref{eq:connection} and then \eqref{eq:identities}, we obtain
\begin{align*}
 \nablab_TN
 &=\left(-\eta'-\xi\eta\frac rR\right)e_\rho
   +\left(\xi'-\eta^2\frac rR\right)e_t\\
 &=-\frac{v_rp}{R}e_\rho+\frac{r''-r}{R}e_t
 =-v_rT.
\end{align*}
Thus the meridional principal curvature is $\mu=v_r$. All expressions are smooth at $p=0$, so the calculation is valid at the neck as well. This proves \eqref{eq:curvatures}, and hence
\begin{equation}\label{eq:potential}
 (d-2)\lambda+\mu=d-1,
 \qquad q:=|A|^2-(d-1)=|A-I|^2
       =(d-1)(d-2)c^2r^{2-2d}>0.
\end{equation}
In particular, $H=1$ and $\lambda-\mu=(d-1)cr^{1-d}>0$ at every point. This completes the geometric part of Theorem~\ref{thm:family}.

\section{Strong stability in all dimensions}\label{sec:stability}

\subsection{An explicit positive supersolution}
For the warped metric \eqref{eq:metric}, every smooth radial function $f=f(s)$ satisfies
\begin{equation}\label{eq:radiallap}
 \Delta f=f''+(d-2)\frac{r'}r f'.
\end{equation}
Let $L=\Delta+q$ be the Jacobi operator, and take
\begin{equation}\label{eq:supersolution}
 u=r^{3-d}>0.
\end{equation}
Substituting into \eqref{eq:radiallap}, the terms containing $(r')^2$ cancel exactly:
\begin{equation}\label{eq:lapu}
 \frac{\Delta u}{u}=-(d-3)\frac{r''}r.
\end{equation}
Combining \eqref{eq:ode} and \eqref{eq:potential} gives
\begin{equation}\label{eq:Lu}
 \frac{Lu}{u}=-(d-3)^2cr^{1-d}+2(d-2)c^2r^{2-2d}.
\end{equation}
Because $r\ge a$,
\begin{equation}\label{eq:Lbound}
 -\frac{Lu}{u}
 =\frac{c}{r^{d-1}}\left((d-3)^2-2(d-2)\frac{c}{r^{d-1}}\right)
 \ge\beta cr^{1-d}.
\end{equation}
The condition $\beta\ge0$ is equivalent to \eqref{eq:threshold}. Indeed, with $B=(d-3)^2/(2(d-2))>0$,
\begin{align*}
 \beta\ge0
 &\Longleftrightarrow \sqrt{1+a^{-2}}-1\le B\\
 &\Longleftrightarrow a^{-2}\le B(B+2)
   =\frac{(d-1)^2(d-3)^2}{4(d-2)^2}\\
 &\Longleftrightarrow a\ge\frac{2(d-2)}{(d-1)(d-3)}.
\end{align*}
All quantities squared in this equivalence are positive.

\subsection{The full quadratic form}
For any positive smooth function $u$ and any $\varphi\in C_c^\infty(M)$, write $\zeta=\varphi/u$. The pointwise identity
\[
 |\nabla(u\zeta)|^2
 =u^2|\nabla\zeta|^2+\langle\nabla u,\nabla(u\zeta^2)\rangle
\]
and integration by parts yield
\begin{equation}\label{eq:groundstate}
 Q(\varphi)=\int_Mu^2\left|\nabla\left(\frac\varphi u\right)\right|^2\dmu
            -\int_M\frac{Lu}{u}\varphi^2\dmu.
\end{equation}
The integration by parts is legitimate because $u\zeta^2=\varphi^2/u$ is compactly supported. No integrability assumption on $u$ at infinity is needed.

With $u=r^{3-d}$, equations \eqref{eq:Lu} and \eqref{eq:groundstate} give the exact identity
\begin{align}\label{eq:exact-groundstate}
 Q(\varphi)
 &=\int_Mr^{6-2d}\bigl|\nabla(r^{d-3}\varphi)\bigr|^2\dmu\notag\\
 &\quad+\int_M\left((d-3)^2cr^{1-d}-2(d-2)c^2r^{2-2d}\right)\varphi^2\dmu.
\end{align}
Applying \eqref{eq:Lbound} proves \eqref{eq:weighted} and completes the proof of Theorem~\ref{thm:family}. The argument applies to arbitrary functions of $(s,\omega)$: it imposes neither rotational symmetry nor the zero-integral restriction of weak stability.

\begin{remark}\label{rem:nogap}
If $a>a_*(d)$, the coefficient $\beta$ is strictly positive. Nevertheless, the weight $r^{1-d}$ decays at infinity. The weighted inequality is not a claim of a uniform positive lower bound for $Q$ by a multiple of $\|\varphi\|_{L^2(M)}^2$.
\end{remark}

\section{One explicit neck radius for every ambient dimension}\label{sec:explicit}

\subsection{Uniform rational parameters}
Take $a=12/5$ in every dimension. Since $\sqrt{1+a^2}=13/5$, equation \eqref{eq:parameters} gives
\[
 c=c_d=\frac15\left(\frac{12}{5}\right)^{d-2}
      =\frac{12^{d-2}}{5^{d-1}},
 \qquad \alpha=\frac1{12}.
\]
Consequently,
\begin{equation}\label{eq:beta-positive}
 \beta_d=(d-3)^2-\frac{d-2}{6}
 =\frac23+\frac{(d-4)(6d-13)}6\ge\frac23
 \qquad(d\ge4).
\end{equation}
Theorem~\ref{thm:family} now proves Corollary~\ref{cor:all}. The same neck radius works in all dimensions, although the profile $r=r_d$ and the constant $c_d$ depend on $d$.

At the neck the principal curvatures and potential are
\begin{equation}\label{eq:neckvalues}
 \lambda=\frac{13}{12}\quad(d-2\text{ times}),
 \qquad \mu=\frac{14-d}{12},
 \qquad q=\frac{(d-1)(d-2)}{144}.
\end{equation}
Their normalized mean is exactly one, and their nonumbilicity is explicit. The sign of the meridional curvature is immaterial to the stability proof; no convexity assumption is used.

\subsection{An all-dimensional conjugation retaining angular derivatives}
There is a second direct verification of strong stability for this explicit family. It also generalizes the one-dimensional conjugation in the case of a three-dimensional hypersurface. Let
\[
 m=\frac{d-2}{2},\qquad \psi=r^m\varphi.
\]
Since $\mathrm d\mu=r^{d-2}\mathrm ds\,\mathrm d\omega$, substituting $\varphi=r^{-m}\psi$ into \eqref{eq:Q} gives
\begin{equation}\label{eq:conjugated}
 Q(\varphi)=\int_{\RR\times\SSS^{d-2}}
 \left(\psi_s^2+\frac{|\nabla_{\SSS^{d-2}}\psi|^2}{r^2}
             +V_d\psi^2\right)\dd s\dd\omega,
\end{equation}
where
\begin{align}\label{eq:V}
 V_d
 &=m\frac{r''}r+m(m-1)\left(\frac{r'}r\right)^2-q\notag\\
 &=\frac{(d-2)(d-4)}4\left(\frac{r'}r\right)^2
   +\frac{(d-2)c_d}{2r^{d-1}}
       \left(d-3-d\frac{c_d}{r^{d-1}}\right).
\end{align}
For clarity, the cross term from the radial derivative is $-2m(r'/r)\psi\psi_s$. Integration by parts replaces it by $m(r'/r)'\psi^2$. Combining this with $m^2(r'/r)^2\psi^2$ gives the first line of \eqref{eq:V}; the second line follows from \eqref{eq:ode} and \eqref{eq:potential}. Compact support eliminates all boundary terms.

For $d\ge4$, the first term in the second line of \eqref{eq:V} is nonnegative. Moreover, $c_d/r^{d-1}\le1/12$, so
\begin{equation}\label{eq:Vpositive}
 V_d\ge\delta_dc_dr^{1-d},
 \qquad \delta_d:=\frac{(d-2)(11d-36)}{24}>0.
\end{equation}
Equations \eqref{eq:conjugated} and \eqref{eq:Vpositive} give a separate verification of strong stability in every $d\ge4$. In particular, all angular derivatives remain present in the calculation; no spherical-harmonic restriction is made.

\subsection{Recovery of the example in \texorpdfstring{$\HH^4$}{H4}}
When $d=4$, the hypersurface has dimension three and
\[
 a=\frac{12}{5},\qquad c_4=\frac{144}{125},
 \qquad \beta_4=\delta_4=\frac23.
\]
Here $\psi=r\varphi$ and $V_4=c_4r^{-3}(1-4c_4r^{-3})$. Thus either stability computation yields
\begin{equation}\label{eq:d4}
 Q(\varphi)\ge\frac{96}{125}\int_{M_4}r^{-3}\varphi^2\dmu.
\end{equation}
At the neck, the principal curvatures are $13/12,13/12,5/6$, and $q=1/24$. The four-dimensional ambient example is therefore a particular member of the all-dimensional family, not a separate construction.

\section{Geometry of the ends and the dimensional endpoint}\label{sec:ends}

\begin{proposition}\label{prop:ends}
For every $d\ge4$ and every $a>0$, the hypersurface of Theorem~\ref{thm:family} has two ends, finite topology and bounded second fundamental form. For $o=(0,\omega_0)$ and intrinsic balls $B_\ell(o)$,
\begin{equation}\label{eq:volume}
 \lim_{\ell\to\infty}\frac{\Vol(B_\ell(o))}{\ell^{d-1}}=2\omega_{d-1},
\end{equation}
where $\omega_{d-1}$ is the Euclidean unit-ball volume in dimension $d-1$. Moreover, for every $p>0$,
\begin{equation}\label{eq:total}
 \int_M|A-I|^p\dmu<\infty\quad\Longleftrightarrow\quad p>1.
\end{equation}
In particular, the strongly stable examples have finite $\int_M|A-I|^{d-1}\dmu$.
\end{proposition}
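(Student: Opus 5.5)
Everything will be read off the warped-product structure \eqref{eq:metric} on $\RR\times\SSS^{d-2}$ together with the profile facts from Section~\ref{sec:geometry}: $r$ is even, $r\ge a$, $|r'|\le1$, and \eqref{eq:asymptotics} holds.

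\textbf{Topology and curvature.} The underlying manifold is $\RR\times\SSS^{d-2}$. The complement of the compact set $[-1,1]\times\SSS^{d-2}$ has exactly two components, and each is connected at infinity since $\SSS^{d-2}$ is connected for $d\ge4$. This gives two ends and finite topology. Bounded second fundamental form follows from \eqref{eq:curvatures}, because $0<c/r^{d-1}\le c/a^{d-1}=\alpha$.

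\textbf{Integrability of $|A-I|^p$.} By \eqref{eq:potential}, $|A-I|=\sqrt{(d-1)(d-2)}\,c\,r^{1-d}$. Hence
\[
\int_M|A-I|^p\dmu=C_p\int_\RR r^{p(1-d)+d-2}\dd s .
\]
Since $r$ is bounded between positive constants on compact sets and $r\sim|s|$ at infinity by \eqref{eq:asymptotics}, the integral is finite if and only if $p(d-1)-(d-2)>1$, that is, $p>1$. Taking $p=d-1>1$ gives the final sentence of the proposition.

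\textbf{Volume growth.} The distance from $o=(0,\omega_0)$ is the delicate step. The lower bound is immediate: the curve $s\mapsto(s,\omega)$ has length $|s|$, and any curve reaching slice $s$ has length at least $|s|$. So $d(o,(s,\omega))\ge|s|$, and $B_\ell(o)\subset[-\ell,\ell]\times\SSS^{d-2}$. Therefore
\[
\Vol(B_\ell)\le \Vol(\SSS^{d-2})\int_{-\ell}^{\ell}r^{d-2}\dd s\sim \frac{2\Vol(\SSS^{d-2})}{d-1}\,\ell^{d-1}=2\omega_{d-1}\ell^{d-1},
\]
using $r(s)/|s|\to1$ and $\Vol(\SSS^{d-2})=(d-1)\omega_{d-1}$.

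For the upper bound on distance, join $o$ to $(s,\omega)$ by first moving within the neck sphere $\{0\}\times\SSS^{d-2}$, at cost at most $\pi a$, and then along the meridian. This gives $d(o,(s,\omega))\le|s|+\pi a$. Hence $B_\ell\supset[-(\ell-\pi a),\ell-\pi a]\times\SSS^{d-2}$. The same asymptotic computation with $\ell-\pi a$ in place of $\ell$ gives the matching lower bound, so both bounds squeeze to \eqref{eq:volume}.

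The only care needed is the ratio limit $\int_0^\ell r^{d-2}\dd s/\ell^{d-1}\to1/(d-1)$. I expect to get it by L'H\^opital, or by the elementary estimate $r(s)=s+o(s)$. This is the main, though mild, obstacle; everything else is bookkeeping.
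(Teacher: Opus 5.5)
Your proposal is correct and follows the paper's proof essentially step for step. Both arguments use the two-component complement of a compact slab, the distance sandwich $|s|\le d\bigl(o,(s,\omega)\bigr)\le|s|+\pi a$ giving nested slabs, and the reduction of $\int_M|A-I|^p\dmu$ to a constant times $\int_\RR r^{d-2-(d-1)p}\dd s$ with $r\sim|s|$; your L'H\^opital step for $\int_0^\ell r^{d-2}\dd s/\ell^{d-1}\to1/(d-1)$ is exactly what the paper leaves implicit when it invokes $r(s)/s\to1$.
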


\begin{proof}
The topology is $\RR\times\SSS^{d-2}$, so the complement of a sufficiently large compact slab has exactly two connected components. Boundedness of $A$ follows from $r\ge a$ and \eqref{eq:curvatures}.

The intrinsic distance from $o$ to $(s,\omega)$ is at least $|s|$. It is at most $|s|+\pi a$: first travel along the neck sphere and then along a meridian. Hence, for $\ell>\pi a$,
\[
 \{|s|<\ell-\pi a\}\subset B_\ell(o)\subset\{|s|<\ell\}.
\]
The slab $\{|s|<\ell\}$ has volume
\[
 2|\SSS^{d-2}|\int_0^\ell r(s)^{d-2}\dd s.
\]
The limit $r(s)/s\to1$ in \eqref{eq:asymptotics}, together with the two inclusions, proves \eqref{eq:volume}.

Finally, \eqref{eq:potential} gives
\[
 |A-I|=\sqrt{(d-1)(d-2)}\,c\,r^{1-d}.
\]
The integral in \eqref{eq:total} is therefore a positive constant times
\[
 \int_{\RR}r(s)^{d-2-(d-1)p}\dd s.
\]
There is no singularity on compact sets because $r\ge a>0$. By \eqref{eq:asymptotics}, the integral at infinity is finite exactly when $d-2-(d-1)p<-1$, or equivalently $p>1$.
\end{proof}

\begin{remark}[Why the family does not extend to stable surfaces in $\HH^3$]\label{rem:d3}
There is no conflict with da Silveira's surface rigidity theorem \cite{Silveira}. The obstruction is visible directly in the same rotational formulas at ambient dimension $d=3$. In that case,
\[
 0<c=a(\sqrt{1+a^2}-a)<\frac12,\qquad
 r''=c^2r^{-3},\qquad q=2c^2r^{-4}>0,
\]
and the induced metric is $\mathrm ds^2+r^2\mathrm d\theta^2$, with $r(s)\le a+|s|$. The even solution is global by the same ODE continuation argument.

For $T>1$, choose the Lipschitz cutoff
\[
 \chi_T(s)=
 \begin{cases}
 1,&|s|\le1,\\
 \log(T/|s|)/\log T,&1<|s|<T,\\
 0,&|s|\ge T.
 \end{cases}
\]
Its Dirichlet energy satisfies
\[
 \int_M|\nabla\chi_T|^2\dmu
 \le\frac{4\pi}{(\log T)^2}\int_1^T\frac{a+s}{s^2}\dd s
 =\frac{4\pi\bigl(a(1-T^{-1})+\log T\bigr)}{(\log T)^2}
 \longrightarrow0.
\]
On the other hand,
\[
 \int_M q\chi_T^2\dmu\ge\int_{\{|s|\le1\}}q\dmu>0.
\]
Thus $Q(\chi_T)<0$ for all sufficiently large $T$. Smooth compactly supported approximation preserves this strict inequality. The two-dimensional members of the rotational family are therefore not strongly stable.
\end{remark}

The counterexamples proved here concern the exact endpoint $H=1$. They do not assert the existence of stable noncompact hypersurfaces with $H>1$ in every dimension, and in particular do not contradict Hong's strict-inequality nonexistence theorem in $\HH^4$ under its additional hypotheses \cite{Hong}. Nor are these examples minimal hypersurfaces: the case $H=0$ is a different problem. Their role is to show, in every ambient dimension $d\ge4$, that endpoint strong stability alone does not force horospherical rigidity.

\section*{Acknowledgment of AI assistance}
The author used ChatGPT in this work.The author takes full responsibility for the content of the paper.

\end{document}